\numberwithin{equation}{section}
\newcommand{\beq}{\begin{equation}}
\newcommand{\norm}[1]{\lVert #1 \rVert}
\newcommand{\eeq}{\end{equation}}
\newcommand{\beqn}{\begin{eqnarray}}
\newcommand{\eeqn}{\end{eqnarray}}
\newcommand{\bal}{\begin{align}}
\newcommand{\eal}{\end{align}}
\renewcommand{\Re}{\mathrm{Re}\,}
\newcommand{\scalar}[2]{\langle{#1} \mspace{2mu}, {#2}\rangle}
\def\({\left(}
\def\){\right)}
\def\11{{\mathbbmss 1}}
\def\i{{\mathrm i}}
\def\e{{\mathrm e}}
\def\mez{{-\frac{1}{2}}}
\def\pez{{\frac{1}{2}}}
\def\mdz{{-\frac{3}{2}}}
\def\pdz{{\frac{3}{2}}}
\newtheorem{theorem}{Theorem}[section]
\newtheorem{proposition}[theorem]{Proposition}
\newtheorem{lemma}[theorem]{Lemma}
\newtheorem*{rem*}{Remarks}
\begin{document}

\title{Some Hamiltonian Models of Friction}
\author{J\"urg Fr\"ohlich\footnote{juerg@itp.phys.ethz.ch},\  Zhou Gang\footnote{zhougang@itp.phys.ethz.ch} and Avy Soffer\footnote{soffer@math.rutgers.edu}}
\maketitle
\setlength{\leftmargin}{.1in}
\setlength{\rightmargin}{.1in}
\normalsize \vskip.1in
\setcounter{page}{1} \setlength{\leftmargin}{.1in}
\setlength{\rightmargin}{.1in}
\large
\centerline{$^{\ast,\dagger}$Institute for Theoretical Physics, ETH Zurich, CH-8093, Z\"urich, Switzerland}
\centerline{$^{\ddagger}$Department of Mathematics, Rutgers University, New Jersey 08854}
\date
\abstract{\large Mathematical results on some models describing the motion of a tracer particle through a Bose-Einstein condensate are described. In the limit of a very dense, very weakly interacting Bose gas and for a very large particle mass, the dynamics of the coupled system is determined by classical non-linear Hamiltonian equations of motion. The particle's motion exhibits deceleration corresponding to friction (with memory) caused by the emission of Cerenkov radiation of gapless modes into the gas.

Precise results are stated and outlines of proofs are presented. Some technical details are deferred to forthcoming papers.}
\large

\section{Background from Physics}
In this introductory section, we recall some results of an analysis presented in a companion paper \cite{FSSZ}.

The physical system studied in this paper consists of a heavy, non-relativistic tracer particle interacting with the non-relativistic atoms in a Bose gas. Our purpose is to introduce some mathematical models describing this system and to analyze their properties in the limiting regime where the density of the Bose gas becomes very large, the interactions between atoms in the gas become very weak, and the mass of the tracer particle is very large. This regime is commonly called ``{\it{mean-field limit}}". In this limit, the dynamics of the system is determined by classical, nonlinear {\it{Hamiltonian equations of motion.}} If the gas exhibits Bose-Einstein condensation these equations describe a process of emission of gapless modes into the gas by the tracer particle analogous to the {\it{Cerenkov radiation}} emitted by a fast charged particle moving through an optically dense medium, \cite{JDJ, KM, FSSZ}. The emission of Cerenkov radiation causes a deceleration of the particle's motion. Our models thus describe a Hamiltonian mechanism of friction with memory. Our aim is to describe mathematical results on the simplest model exhibiting this mechanism.

We start by considering a gas of $n$ bosonic atoms confined to some cubical region, $\Lambda$, of physical space $\mathbb{R}^{3}.$ The pure states of this gas are unit rays in the Hilbert space
\begin{equation}
\mathcal{H}_{G}^{(n)}=\mathcal{L}^{2}(\Lambda, d^{3}x)^{\otimes_{s}n}
\end{equation} where $\otimes_{s}$ denotes the symmetric tensor product expressing our assumption that the atoms obey Bose-Einstein statistics. The Hamiltonian of the gas is given by the operator
\begin{equation}
H_{G}^{(n)}:=-\sum_{k=1}^{n} \frac{1}{2m}\Delta_{x_k}^{\Lambda}+\lambda \sum_{1\leq k\leq l \leq n}\phi(x_k-x_l),
\end{equation} where $m$ is the mass of an atom, $\Delta_{x}^{\Lambda}$ denotes the Laplacian on $\mathcal{L}^{2}(\Lambda, d^3 x)$ with, e.g., periodic boundary conditions at $\partial \Lambda,$ $\lambda\geq 0$ is a coupling constant, and $\phi$ is a bounded two-body potential of positive type and of fast decay at infinity.

The state space of the tracer particle is given by
\begin{equation}
\mathcal{H}_{p}:=\mathcal{L}^{2}(\Lambda, d^3 x)
\end{equation} and its Hamiltonian is chosen to be
\begin{equation}
H_{p}:=-\frac{1}{2M} \Delta_{X}^{(\Lambda)}+V(X),
\end{equation} where $M$ is the mass and $X\in \Lambda$ the position of the particle, and $V(X)$ is the potential of an external force. The interaction of the particle with the gas is given by
\begin{equation}
H_{I}:=g\sum_{k=1}^{n}W(x_k-X),
\end{equation} where $W$ is a bounded two-body potential of fast decay at infinity, and $g$ is a coupling constant.

The Hilbert space of the total system is
\begin{equation}
\mathcal{H}^{(n)}=\mathcal{H}_{p}\otimes \mathcal{H}_{G}^{(n)},
\end{equation} and the total Hamiltonian is the operator
\begin{equation}
\tilde{H}^{(n)}=H_{p}+H_{G}^{(n)}+H_{I},
\end{equation} where $H_{p}$ stands for $H_{p}\otimes I|_{\mathcal{H}_{G}^{(n)}}$ and $H_{G}^{(n)}$ for $I|_{\mathcal{H}_{p}}\otimes H_{G}^{(n)}.$

We will be interested in studying the properties of this system in the thermodynamic limit
\begin{equation}
\Lambda \nearrow \mathbb{R}^{3},\ \text{with}\ \rho:=\frac{n}{|\Lambda|}\ \text{kept fixed},
\end{equation} where $|\Lambda|$ denotes the volume of $\Lambda$ and $\rho$ is the particle density of the gas. For this purpose, it is convenient to describe our model in the formalism of second quantization.

The bosonic Fock space of the Bose gas is the Hilbert space
\begin{equation}
\mathcal{F}_{G}:=\oplus_{n=0}^{\infty}\mathcal{H}_{G}^{(n)},
\end{equation} the total Hilbert space is given by
\begin{equation}\label{eq:Hamiltonian1}
\mathcal{H}=\mathcal{H}_{p}\otimes \mathcal{F}_{G},
\end{equation} and the Hamiltonian by
\begin{equation}
\tilde{H}=\oplus_{n=0}^{\infty} \tilde{H}^{n}|_{\mathcal{H}^{(n)}}.
\end{equation} In order to tune the density of the Bose gas in states of small total energy to a desired value $\approx \rho$, we replace the operator $\tilde{H}$ by
\begin{equation}
H:=\oplus_{n=0}^{\infty}(\tilde{H}^{(n)}-\mu n)|_{\mathcal{H}^{n}}+\text{constant}.
\end{equation}
Note that, on states of a fixed number, $n$, of atoms, $H$ differs from $\tilde{H}$ just by the constant - $\mu n+\text{constant},$ and hence describes the same physics. Setting $\mu:=\lambda \hat{\phi}(0)\rho-\frac{\lambda}{2} \phi(0)$, $\text{constant}:=-g \hat{W}(0) \rho+\frac{\lambda}{2} \hat{\phi}(0) \rho^2|\Lambda|,$ where $\ \hat{}\  $ indicates Fourier transformation, we find for $H$, in the formalism of second quantization, the expression
\begin{equation}\label{eq:Hamiltonian}
\begin{array}{lll}
H&=&-\frac{1}{2M}\Delta_{X}+V(X)+\int \frac{1}{2m} (\nabla a^*)(x) (\nabla a)(x)\ d^3 x\\
& &\\
& &+g\int W(x-X) (a^*(x) a(x)-\rho)\ d^3 x\\
& &\\
& &+\frac{\lambda}{2} \int\int (a^*(x) a(x)-\rho)\phi(x-y)(a^*(y) a(y)-\rho)\ d^3 x d^3 y,
\end{array}
\end{equation} where reference to the integration domain $\Lambda$ is omitted and where the operators $a^*(x)$ and $a(x)$ are the usual creation- and annihilation operators on $\mathcal{F}_{G}.$ They are operator-valued distributions on $\mathcal{F}_{G}$ satisfying the canonical commutation relations
\begin{equation}\label{eq:commutator}
[a^{\#}(x), a^{\#}(y)]=0,\ \  \ [a(x), a^*(y)]=\delta(x-y)I,
\end{equation} $a^{\#} :=a $ or $a^*$.

We are interested in analyzing the dynamics of the system described by Equations ~\eqref{eq:Hamiltonian1}, ~\eqref{eq:Hamiltonian} and ~\eqref{eq:commutator} in the thermodynamic limit, $\Lambda \nearrow \mathbb{R}^{3}.$ In particular, we would like to determine the states of lowest total energy, i.e., the ground states of $H$. This poses a formidable mathematical problem that is far from being understood rigorously. In order to simplify matters, we propose to study the following limiting regime (``mean-field limit''): Let $N>0$ be a parameter that will eventually be taken to $\infty$. We set
\begin{equation}\label{eq:ParaRescaling}
\rho=N\frac{\rho_0}{g^2},\ \lambda=N^{-1} \lambda_0 g^2,\ M=NM_0,\ V(X)=NV_0(X)
\end{equation} where $\rho_0$, $g$, $\lambda_0$, $M_0$ and $V_0(X)$ are fixed (i.e., $N$-independent). For physical motivation underlying the choice ~\eqref{eq:ParaRescaling}, see ~\cite{FSSZ}. We set
\begin{equation}
b^{\#}_{N}(x):=N^{-\frac{1}{2}} a^{\#}(x)-\sqrt{\frac{\rho_0}{g^2}}.
\end{equation} The operators $b_{N}^{*}(x)$ and $b_{N}(x)$ are creation- and annihilation operators satisfying the commutation relations
\begin{equation}\label{eq:NewCommutator}
[b_{N}^{\#}(x), b_{N}^{\#}(y)]=0,\ [b_{N}(x), b_{N}^{*}(y)]=N^{-1} \delta(x-y)I.
\end{equation}

In the new variables the Hamiltonian $H$ introduced in ~\eqref{eq:Hamiltonian} is given by
\begin{equation}\label{eq:HamiRescaled}
H=N H_{N}
\end{equation} where
\begin{equation}\label{dif:HamiHn}
\begin{array}{lll}
H_{N}&:=&-\frac{1}{2N^2 M_0}\Delta_{X}+V_0(X) +\int \frac{1}{2m} (\nabla b_{N}^{*})(x)(\nabla b_{N})(x)\ d^3 x\\
& &\\
& &+g \int W(x-X) \{ b_{N}^{*}(x) b_{N}(x)+\sqrt{\frac{\rho_0}{g^2}} (b_{N}^{*}(x)+b_{N}^{*}(x))\}\ d^3 x\\
& &+\frac{\lambda_0 g^2}{2} \int\int [b_{N}^{*}(x) b_{N}(x)+\sqrt{\frac{\rho_0}{g^2}} (b_{N}^{*}(x)+b_{N}^{*}(x))]\phi(x-y) \ \times\\
& & [b_{N}^{*}(y) b_{N}(y)+\sqrt{\frac{\rho_0}{g^2}} (b_{N}^{*}(y)+b_{N}^{*}(y))] \ d^3 x d^3 y.
\end{array}
\end{equation}
The Schr\"odinger equation for the time evolution of states, $\Psi_{t},$ in the Hilbert space $\mathcal{H}$ introduced in ~\eqref{eq:Hamiltonian1} has the form
\begin{equation}\label{eq:LinSchrodinger}
i\frac{\partial}{\partial t}\Psi_{t} =H\Psi_{t}\ \Longleftrightarrow \ iN^{-1}\frac{\partial}{\partial t}\Psi_{t}=H_{N}\Psi_{t}.
\end{equation} Considering Equations ~\eqref{eq:ParaRescaling}, ~\eqref{eq:NewCommutator}, ~\eqref{dif:HamiHn} and ~\eqref{eq:LinSchrodinger}, we see that $N^{-1}$ plays the role of Planck's constant $\hbar$ and that the mean-field limit, $N\rightarrow \infty,$ is equivalent to a {\it{classical limit}}, ($\hbar \searrow 0$). From Equations ~\eqref{eq:NewCommutator} through ~\eqref{eq:LinSchrodinger} it is easy to heuristically derive the classical Hamiltonian dynamics emerging in the mean-field limit.

The phase space of the limiting classical system is given by
\begin{equation}
\Gamma :=\mathbb{R}^{6} \times \mathcal{H}^{1}(\mathbb{R}^3),
\end{equation} where $\mathcal{H}^{1}(\mathbb{R}^3)$ is complex Sobolev space over $\mathbb{R}^3.$

We choose the usual position- and momentum coordinates $(X,P)$ on $\mathbb{R}^{6}$ and complex coordinate functions $(\beta(x),\ \bar{\beta}(x))$ on $\mathcal{H}^{1}(\mathbb{R}^3)$. The standard symplectic structure on $\Gamma$ yields the Poisson brackets
\begin{equation}
\{X^i,\ X^j\}=\{P_i,\ P_j\}=0,\ \{X^i, \ P_{j}\}=\delta^{i}_{\ j},
\end{equation} $i,j,=1,2,3,$ and
\begin{equation}
\{\beta(x),\ \beta(y)\}=\{\bar{\beta}(x),\ \bar{\beta}(y)\}=0,\ \{\beta(x),\ \bar{\beta}(y)\}=i \delta(x-y).
\end{equation} Corresponding to the Hamiltonian $H_{N}$ we consider the Hamilton functional
\begin{equation}
\begin{array}{lll}
\mathcal{H}&=&\mathcal{H}(X,P;\beta,\bar\beta)\\
&:=&\frac{P^2}{2M_0} +V_0(X)+\int \frac{1}{2m} |\nabla \beta (x)|^2\ d^3x\\
& &+g\int W(x-X) \{|\beta(x)|^2 +2\sqrt{\frac{\rho_0}{g^2}}Re \beta(x)\}\ d^3 x\\
& &+\frac{\lambda_0 g^2}{2} \int\int [|\beta(x)|^2+2\sqrt{\frac{\rho_0}{g^2}} Re\beta(x)]\phi(x-y)[|\beta(y)|^2+2\sqrt{\frac{\rho_0}{g^2}} Re\beta(y)]\ d^3 x d^3 y.
\end{array}
\end{equation}

Setting $\alpha(x)=\beta(x)+\sqrt{\frac{\rho_0}{g^2}},$ the Hamilton functional $\mathcal{H}$ is seen to be given by
\begin{equation}\label{eq:HamiAlpha}
\begin{array}{lll}
\mathcal{H}&=&\frac{P^2}{2M_0}+V_{0}(X)+\frac{1}{2m}\int |\nabla \alpha(x)|^2\ d^3 x\\
&+&g \int W(x-X) (|\alpha(x)|^2-\frac{\rho_0}{g^2}) \ d^3 x\\
&+& \frac{\lambda_0 g^2}{2}\int\int (|\alpha(x)|^2-\frac{\rho_0}{g^2}) \phi(x-y)(|\alpha(y)|^2-\frac{\rho_0}{g^2})\ d^3 xd^3 y,
\end{array}
\end{equation} and the equations of motion are found to be
\begin{equation}\label{eq:XtPt}
\dot{X}_{t}=\frac{P_{t}}{M},\ \
\dot{P}_{t}=-\nabla V(X_{t}) +g \int (\nabla W)(x-X_t) (|\alpha_t (x)|^2-\frac{\rho_0}{g^2})
\end{equation}
and
\begin{equation}\label{eq:Alphat}
i\dot\alpha_{t}(x)=(-\frac{1}{2m}\Delta+g W(x-X_{t}))\alpha_{t}(x)+\lambda_0 g^2 (\phi* (|\alpha_t (x)|^2-\frac{\rho_0}{g^2}))(x)\alpha_{t}(x),
\end{equation} where $*$ indicates a convolution. Setting $W\equiv 0,$ we find the explicit ground state solutions
\begin{equation}\label{eq:ZeroSol}
\begin{array}{lll}
\dot{P}_{t}&=&0,\ X_{t}=X_*: \ \text{a minimum of}\ V_{0}(X),\\
\alpha_{t}&=& \alpha_* = e^{i\theta} \sqrt{\frac{\rho_0}{g^2}},
\end{array}
\end{equation} where $\theta$ is an arbitrary phase. These solutions show that, for $\rho_0>0,$ the continuous gauge symmetry
$$\alpha(x)\rightarrow e^{i\theta} \alpha(x),\ \bar\alpha(x)\rightarrow e^{-i\theta}\bar\alpha(x)$$ of the Hamilton functional ~\eqref{eq:HamiAlpha} is spontaneously broken in the ground states, which corresponds to Bose-Einstein condensation. One then expects that the Bose gas exhibits gapless (Goldstone) modes at zero temperature. In ~\cite{FSSZ}, the ground state solutions are constructed for $W\not=0.$

It has been shown in ~\cite{Hepp} that, in a finite periodic box (torus) $\Lambda$, the dynamics of the quantum system with initial conditions given by a ``coherent state'' in $\mathcal{H}$ is given by evolving the coherent state along a solution of ~\eqref{eq:XtPt}, ~\eqref{eq:Alphat}, with harmonic quantum fluctuations of amplitude $O(\frac{1}{\sqrt{N}})$ around the classical solution, as $N\rightarrow \infty.$ The spectrum of these fluctuations is found by linearizing Equations ~\eqref{eq:XtPt}, ~\eqref{eq:Alphat} around a given classical solution. For $W\equiv 0,$ and choosing the classical solutions to be given by a ground state ~\eqref{eq:ZeroSol}, the frequency spectrum of the harmonic quantum fluctuations of the Bose gas can be found by passing to the ``Bogoliubov limit'' $g\rightarrow 0$. One then finds the dispersion law
\begin{equation}
\omega(k)=|k| \sqrt{\frac{k^2}{4m^2}+\frac{\lambda_0 \rho_0}{m}\hat{\phi}(k)}\displaystyle\approx v_{*}|k|, \ \text{as}\ |k|\rightarrow 0,
\end{equation} where $k\in \Lambda^*$ is the wave vector of a normal mode and $v_*=\sqrt{\frac{\lambda_0\rho_0}{m}\hat{\phi}(0)}$ is the speed of sound in the Bose gas; see \cite{B, FSSZ}, and \cite{S} for a somewhat different approach.

It is a challenging open problem to find out whether the mean-field limit $N\rightarrow \infty$ and the thermodynamic limit $\Lambda \nearrow \mathbb{R}^3$ can be interchanged. (It is not surprising that this problem is difficult, because there is no mathematically rigorous understanding of Bose-Einstein condensation in an interacting Bose gas, in the thermodynamic limit.) However, for an ideal Bose gas $(\lambda_0=0)$ and in the Bogoliubov limit ($g=0$), this problem is understood, ~\cite{Hepp}.

From now on, we suppose that we first pass to the mean-field limit and then to the thermodynamic limit or that we consider models in the Bogoliubov limit.

Our analysis shows that, for vanishing potentials $W$ and $V_0$, the energy-momentum spectrum of the linear quantum fluctuations of the system is as indicated in Figure ~\ref{Figure1}.

\begin{center}\label{Figure1}
\begin{picture}(0,0)%
\includegraphics{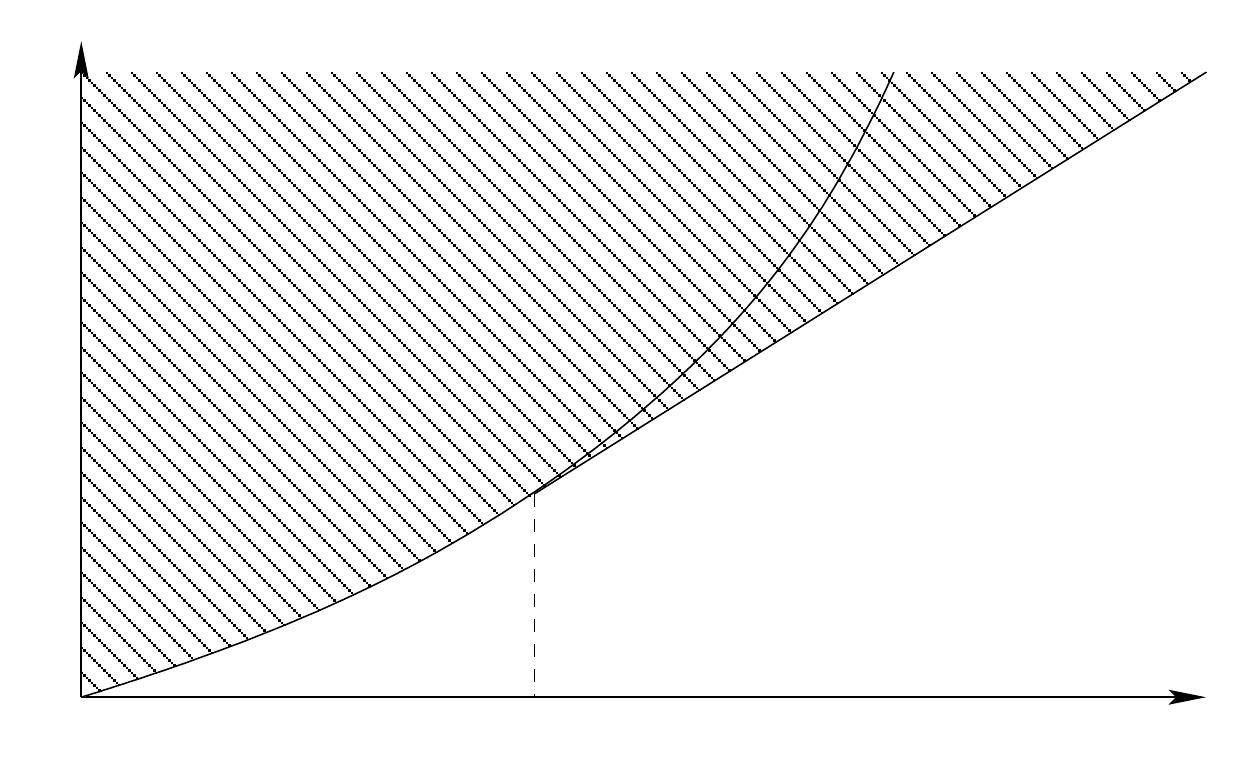}%
\end{picture}%
\setlength{\unitlength}{3947sp}%
\begingroup\makeatletter\ifx\SetFigFontNFSS\undefined%
\gdef\SetFigFontNFSS#1#2#3#4#5{%
  \reset@font\fontsize{#1}{#2pt}%
  \fontfamily{#3}\fontseries{#4}\fontshape{#5}%
  \selectfont}%
\fi\endgroup%
\begin{picture}(5955,3636)(2611,-5464)
\put(8551,-5236){\makebox(0,0)[lb]{\smash{{\SetFigFontNFSS{12}{14.4}{\familydefault}{\mddefault}{\updefault}{\color[rgb]{0,0,0}$|P|$}%
}}}}
\put(2626,-2086){\makebox(0,0)[lb]{\smash{{\SetFigFontNFSS{12}{14.4}{\familydefault}{\mddefault}{\updefault}{\color[rgb]{0,0,0}$E$}%
}}}}
\put(6751,-2011){\makebox(0,0)[lb]{\smash{{\SetFigFontNFSS{12}{14.4}{\familydefault}{\mddefault}{\updefault}{\color[rgb]{0,0,0}$\frac{P^2}{2M}$}%
}}}}
\put(4801,-5386){\makebox(0,0)[lb]{\smash{{\SetFigFontNFSS{12}{14.4}{\familydefault}{\mddefault}{\updefault}{\color[rgb]{0,0,0}$P_{\star}=Mv_{\star}$}%
}}}}
\end{picture}%

\end{center}

We observe that, for $|P|> P_{*}=Mv_{*}$, the energy, $\frac{P^{2}}{2M}$, of a tracer particle with momentum $P$ is embedded in the continuous energy spectrum. Standard ideas of resonance theory suggest that if the interactions between the tracer particle and the atoms in the Bose gas are turned on, i.e., for $W\not=0,$ a state of the tracer particle corresponding to steady motion at a velocity $v=\frac{P}{M},$ with $|v|\geq v_{*}$, is unstable and decays into states of smaller velocity by emission of gapless modes into the Bose gas (Cerenkov radiation); i.e., the particle motion undergoes deceleration until the speed of the particle has dropped to a value below $v_{*}$, see ~\cite{KM,FSSZ}. Spectral results that go in the direction of confirming this picture have been established in \cite{DFP}. To summarize, we note that, apparently, the motion of a tracer particle interacting with the atoms of a condensed Bose gas exhibits {\it{friction by emission of Cerenkov radiation}} until the speed of the particle is below the speed of sound of the Bose gas.

In this paper, we present mathematical results confirming this picture for the simplest models corresponding to $\lambda_0=0,$ i.e., for an ideal Bose gas, in the mean-field limit. In these models, the speed of sound vanishes, i.e. $v_*=0,$ and the friction mechanism is at work until the particle comes to rest. In the variables $(X, P,\beta,\bar\beta)$, the equations of motion are given by
\begin{equation*}
\begin{array}{lll}
\dot{X}_{t}=\frac{P_{t}}{M_0},\\
\dot{P}_{t}=-\nabla V_0(X)+g\int (\nabla W^{X_{t}})(x) (|\beta_{t}(x)|^2+2\sqrt{\frac{\rho_0}{g^2}} Re\beta_{t}(x))\ d^3 x
\end{array}
\end{equation*} and
\begin{equation}\label{eq:Bhxt}
i\dot\beta_{t}(x)=(h^{X_{t}})\beta_{t}(x)+\sqrt{\rho_0} W^{X_{t}}(x),
\end{equation} where $W^{X}(x):=W(x-X)$ and
\begin{equation}\label{eq:hxt}
h^{X}:=-\frac{1}{2m}\Delta +gW^{X}.
\end{equation}

In order to simplify our notations, we set $m=1$ and $\sqrt{\rho_0}=:\kappa$. We assume that $V_0$ is smooth and $W$ is smooth, spherically symmetric, of rapid decay at infinity, and such that $\int W(x)\ d^3x=\hat{W}(0)\not=0.$ Moreover, for reasons of stability of the system, we choose $g$ to be so small that $h^{X}$ does not have any bound states and zero-energy resonances.

It is reasonable to expect that
\begin{equation}\label{eq:trajectoryStop}
P_{t}\rightarrow 0,\ X_{t}\rightarrow X_{\infty},
\end{equation} where $X_{\infty}$ is a (local) minimum of $V_0,$ and
\begin{equation}\label{eq:BoseGas}
\dot\beta_{t}\rightarrow 0, \ i.e., \ \beta_{t}\rightarrow -\kappa (h^{X_{\infty}})^{-1} W^{X_{\infty}},
\end{equation} as time $t$ tends to $\infty.$ We are interested in establishing ~\eqref{eq:trajectoryStop} and ~\eqref{eq:BoseGas} and to find out how $|P_{t}|$ tends to $0$, as $t\rightarrow 0$, for a physically reasonable class of initial conditions and sufficiently small values of $g.$

In order to develop some intuition into the behavior of $|P_{t}|$ for large times $t$, we consider a steady motion of the tracer particle under the influence of a constant external force, i.e. for $V_{0}(x)=-F\cdot X$, in the limit where $g\rightarrow 0;$ (the ``B-model" in the classification of ~\cite{FSSZ}). Such a motion is a solution of the following equations:
\begin{equation}\label{eq:MotionWithF}
\dot{X}_{t}=v,\ 0=\dot{P}_{t}=F+2\kappa \int (\nabla W^{X_{0}+vt})(x)Re\beta_{t}(x)\ d^3x,
\end{equation} where $(X_0, P_0=M_0 v)$ are the initial conditions for the tracer particle, and
\begin{equation*}
\beta_{t}(x)=\gamma(x-X_0-vt),
\end{equation*} where $\gamma$ solves the equation
\begin{equation}\label{eq:difGamma}
-iv\cdot (\nabla \gamma)(x)=-\frac{1}{2}(\Delta \gamma)(x)+\kappa W(x).
\end{equation} The Fourier transformation, $\hat{\gamma}(k)$, of the solution of ~\eqref{eq:difGamma} is given by
\begin{equation}\label{eq:FourierGamma}
\hat{\gamma}(k)=-2\kappa \frac{\hat{W}(k)}{(k-v)^2-v^2-i0}.
\end{equation} Note that $\hat\gamma$ is singular on the sphere $\{k||k-v|=|v|\}.$ Changing variables, $p:=k-v,$ and plugging ~\eqref{eq:FourierGamma} into ~\eqref{eq:MotionWithF}, we find that
\begin{equation}\label{eq:SizeF}
\begin{array}{lll}
F&=&-Re 4\kappa \int i(p+v) \frac{|\hat{W}(p+v)|^2}{p^2-v^2-i0}\ d^3 p\\
&=&8\kappa \pi \int (p+v)|\hat{W}(p+v)|^2 \delta(p^2-v^2)\ d^3 p.
\end{array}
\end{equation} ~\eqref{eq:SizeF} can be viewed as an instance of Fermi's Golden Rule; see ~\cite{FSSZ}. If $W$ is of short range and $|F|$ is small, so that $|v|$ will turn out to be small, too, we may replace $|\hat{W}(p+v)|^2$ by $|\hat{W}(0)|^2$ on the right hand side of ~\eqref{eq:SizeF}. Then
\begin{equation}
F\approx (4\pi)^2 \kappa \hat{W}(0) v^2 \hat{v},
\end{equation} where $\hat{v}$ is the unit vector in the direction of $v$. Not surprisingly, it follows that $F$ and $v$ are parallel.

Let us now assume that $F$ is taken to $0$, but that, at large times when $|\dot{P}_{t}|$ is very small, with $\frac{P_{t}}{M}\approx v= \text{constant},$ the state of the Bose gas is close to $\beta_{t}(x)=\gamma(x-X_0-vt),$ where $\gamma$ is the solution of ~\eqref{eq:difGamma} just constructed. Then the motion of the tracer particle can be found by solving the equations of motion
\begin{equation}
M_{0}\dot{v}=-(4\pi)^2 \kappa \hat{W}(0)v^2\hat{v}.
\end{equation} Choosing initial conditions $P_0=M_0 u_0 \vec{e}_{z}$ (where $\vec{e}_{z}$ is the unit vector in the $z$-direction), we find that $P_{t}=M_0 u_{t}\vec{e}_{z},$ where
\begin{equation*}
\dot{u}_{t}=-C u_{t}^2, \ (C=(4\pi)^2 \kappa M_{0}^{-1} \hat{W}(0))
\end{equation*} so that
\begin{equation*}
u_{t}=(Ct+\frac{1}{u_0})^{-1}.
\end{equation*} Thus
\begin{equation}
|P_{t}|=O(\frac{1}{t}),\ \text{as}\ t\rightarrow \infty,
\end{equation} and it follows that
\begin{equation}
|X_{t}|=\int_{0}^{t} u_t \ dt\rightarrow \infty,
\end{equation} as $ t\rightarrow \infty.$

This heuristic analysis also shows that if $|\hat{W}(k)|=O(|k|^{-\epsilon}),$ $\epsilon>0,$ or for a particle in a confining external potential $V_0$ with non-degenerate minima, or for a {\it{two-dimensional}} system with $\hat{W}(0)\not=0$, but $V_0\equiv 0,$ $X_{t}$ is predicted to converge to a finite point $X_{\infty},$ as $t\rightarrow \infty.$

The problem with these arguments is that the initial condition of the Bose gas, $\beta_0(x)=\gamma(x-X_0)$, with $\gamma$ as in ~\eqref{eq:difGamma}, ~\eqref{eq:FourierGamma}, is highly singular and has infinite energy, and this is unnatural, physically. We propose to analyze the motion of the particle with initial conditions given by
\begin{equation*}
(X_{t=0},\ P_{t=0})=(X_0, M_0 v), \ |v| \ \text{small},\ \beta_{t=0}=\beta_0
\end{equation*} where $\beta_0$ is small and of reasonably fast decay at infinity. Then the effective equations of motion for the particle exhibit memory effects, and it becomes more subtle to find out how $|P_{t}|$ decays to $0$, as $t\rightarrow \infty.$ This is the problem we wish to solve in this paper for the simplest model with $\lambda_0=0$ and $g\rightarrow 0$. We hope to treat models with $\lambda_0>0$ in the ``Bogoliubov limit", $g\rightarrow 0,$ in future work.
\section{Mathematical Results on Friction in an Ideal Bose Gas, and Strategy of Proofs}
We set the coupling constant $\lambda_0$ to $0$ and assume that the potential $W$
\begin{itemize}
\item[(A1)] is smooth;
\item[(A2)] decays to $0$ exponentially at $\infty;$
\item[(A3)] is spherically symmetric; and
\item[(A4)] has the property that $\hat{W}(0)\not=0.$
\end{itemize}

We state our results for the so-called B-model, $\lambda_0=0,$ $g\rightarrow 0$, $V_0\equiv 0.$ The equations of motion then have the following form (see ~\eqref{eq:Bhxt}, ~\eqref{eq:hxt}):
\begin{equation}\label{eq:Motion}
\begin{array}{lll}
\dot{X}_{t}&=&\frac{P_{t}}{M_0},\ \ \dot{P}_{t}=2\kappa \int (\nabla W^{X_{t}})\ Re\beta_{t}\\
i\dot\beta_{t}&=&-\frac{1}{2}\Delta \beta_{t}+\kappa W^{X_{t}},
\end{array}
\end{equation} with $\kappa=\sqrt{\rho_0}$, $W^{X}(x)=W(x-X)$.

Our main result is the following theorem.
\begin{theorem}
For an arbitrary $\delta\in (0,\delta_*)$, where $\delta_*>0$ is some constant, there exists an $\epsilon=\epsilon(\delta)>0$ such that if
\begin{equation}
\|\langle x\rangle^3 \beta_0\|_{2}\leq \epsilon, \ |P_0|\leq \epsilon,
\end{equation} with $\langle x\rangle=\sqrt{1+|x|^2}$, then
\begin{equation}
|P_{t}|\leq \text{const}\ t^{-\frac{1}{2}-\delta},\ \text{as}\ t\rightarrow \infty,
\end{equation} and
\begin{equation}
\lim_{t\rightarrow \infty}\|\beta_{t}-2\kappa (\Delta)^{-1}W^{X_{t}}\|_{\infty}=0.
\end{equation}
\end{theorem}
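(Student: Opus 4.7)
My plan is to treat the field equation for $\beta_t$ as an inhomogeneous linear Schr\"odinger equation, solve it by Duhamel's formula, and substitute the result into the force equation, obtaining a closed integro-differential system for $(X_t,P_t)$ with memory. Writing $U(t):=e^{it\Delta/2}$, we have
\begin{equation*}
\beta_t = U(t)\beta_0 - i\kappa\int_0^t U(t-s)\,W^{X_s}\,ds,
\end{equation*}
and substitution into $\dot P_t = 2\kappa\,\Re\int\nabla W^{X_t}\cdot\beta_t\,dx$ splits the force into a linear term in $\beta_0$ plus a quadratic memory term whose Fourier representation is $\int_0^t\!\!\int ik\,|\hat W(k)|^2 e^{ik\cdot(X_t-X_s)-i(t-s)k^2/2}\,dk\,ds$. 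A further integration by parts in $s$, using $U(t-s)=2i\Delta^{-1}\partial_s U(t-s)$, isolates the instantaneous quasi-static component $2\kappa\Delta^{-1}W^{X_t}$ from $\beta_t$, the remainder being driven by $\dot X_s\cdot\nabla W^{X_s}$. This identity is what will eventually yield the $L^{\infty}$ convergence statement once the trajectory is controlled.

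The core of the argument is a continuity/bootstrap on the decay of $|P_t|$. On a maximal interval $[0,T]$ I would assume $|P_t|\le A\langle t\rangle^{-1/2-\delta}$, so that $|X_t-X_0|\lesssim\langle t\rangle^{1/2-\delta}$ and the localized potential $W^{X_s}$ never moves faster than dispersive time scales. The linear piece of $\dot P_t$ is estimated by the $L^1\to L^\infty$ dispersive bound $\|U(t)\beta_0\|_\infty\lesssim t^{-3/2}\|\beta_0\|_1$, together with $\|\beta_0\|_1\lesssim\|\langle x\rangle^3\beta_0\|_2\le\epsilon$; time-integration contributes $\lesssim\epsilon\langle t\rangle^{-1/2}$ to $P_t$. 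The quadratic memory term is analyzed by isolating its ``friction'' part, obtained by freezing $X_t-X_s\approx (P_t/M_0)(t-s)$ on a short interval near $s=t$ and invoking the Fermi Golden Rule heuristic of \eqref{eq:SizeF}; this yields an effective drag proportional to $-|P_t/M_0|^2\,\widehat{P_t}$ which drives $|P_t|$ strictly below $A\langle t\rangle^{-1/2-\delta}$, while the off-diagonal and trajectory-correction contributions are bounded by weighted oscillatory-integral estimates. For $\epsilon$ small enough one recovers $|P_t|\le(A/2)\langle t\rangle^{-1/2-\delta}$, closing the bootstrap on $[0,\infty)$ by continuity in $T$.

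For the convergence $\|\beta_t-2\kappa\Delta^{-1}W^{X_t}\|_\infty\to 0$ I use the integration-by-parts identity alluded to above, which gives
\begin{equation*}
\beta_t-2\kappa\Delta^{-1}W^{X_t}=U(t)\beta_0-2\kappa\Delta^{-1}U(t)W^{X_0}+2\kappa\Delta^{-1}\int_0^t U(t-s)(\dot X_s\cdot\nabla)W^{X_s}\,ds.
\end{equation*}
The first two terms tend to zero uniformly in $x$ by pointwise dispersive decay of the free Schr\"odinger flow, since $\beta_0$ and $\Delta^{-1}W^{X_0}$ are smooth with sufficient spatial decay (the latter because $W$ decays exponentially and is spherically symmetric). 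The memory term is handled by splitting the $s$-integral at $t/2$: for $s<t/2$ the dispersive decay of $U(t-s)$ dominates, whereas for $s>t/2$ the bootstrap bound $|\dot X_s|\lesssim s^{-1/2-\delta}$ makes the integrand small uniformly in $x$.

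The main technical obstacle will be the extraction of the friction law from the memory kernel along a genuine, slowing trajectory. The Fourier kernel carries a real singularity on the Cerenkov sphere $\{k:\,k\cdot v-k^2/2=0\}$, and one must show that the off-shell contributions decay faster than $\langle t\rangle^{-3/2-\delta}$ uniformly along the trajectory, which requires a careful combination of stationary phase with local-decay estimates for $U(t-s)$. A secondary difficulty is that $X_t$ itself may drift like $\langle t\rangle^{1/2-\delta}$, so plain space-uniform dispersive estimates are insufficient; weighted versions that simultaneously track the decay of $P_t$ and the spatial spreading of $\beta_t$ must be used throughout the bootstrap step.
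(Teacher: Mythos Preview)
Your Duhamel/substitution set-up and the identity for $\beta_t-2\kappa\Delta^{-1}W^{X_t}$ match the paper's starting point. The gap is in the bootstrap step.

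You estimate the $\beta_0$-contribution to $P_t$ by $\epsilon\langle t\rangle^{-1/2}$. That already breaks the bootstrap: for any $\epsilon>0$ and any $A$, one has $\epsilon\langle t\rangle^{-1/2}>(A/2)\langle t\rangle^{-1/2-\delta}$ for large $t$, so the improved bound cannot be recovered. The quadratic ``friction'' term you extract, $\sim -|P_t/M_0|^2$, is too weak to compensate: under the ansatz $|P_t|\lesssim t^{-1/2-\delta}$ it is of order $t^{-1-2\delta}$, hence subdominant to the linear dynamics. The heuristic from Section~1 leading to $|P_t|\sim t^{-1}$ relies on the singular ``dressed'' initial state $\beta_0=\gamma(\cdot-X_0)$ of infinite energy; the paper explicitly warns that for regular $\beta_0$ the mechanism is different.

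What the paper actually does is extract the \emph{linear} part of the effective equation for $P$, write it as $\dot P=L_1(P)+L_2(P)+N(P)$ with $L_1$ a convolution, and introduce the Volterra kernel $K_t$ solving $\dot K=L_1(K)$, $K_0=1$. One finds $K_t\sim c\,t^{-1/2}$, so each individual term in the Duhamel representation $P_t=K_tP_0+\int_0^t K_{t-s}(L_2+N)(P)(s)\,ds$ decays no faster than $t^{-1/2}$. The extra $\delta$ comes from a cancellation obtained by subtracting $K_t$ times the time-integrated equation, after which the apparently bad term is identified with $\dot K_t\cdot\int_0^tP_s\,ds=O(t^{-3/2})\cdot O(t^{1/2-\delta})$. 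The proof is then closed by a contraction argument on $\mathcal B_{\delta,T}=\{P:\sup_{t\ge T}t^{1/2+\delta}|P_t|<\infty\}$, with an explicit contraction constant $\Omega(\delta)<1$ computed from the leading asymptotics of $K$ and of $\Re\langle W,e^{i\Delta s/2}W\rangle$. No Fermi-Golden-Rule drag term is used; the decay is entirely a linear phenomenon at the level of the $P$-equation, with $N(P)$ treated perturbatively.

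A secondary issue: since $\hat W(0)\neq 0$, one has $\Delta^{-1}W\sim c/|x|$ at infinity, so $\Delta^{-1}W^{X_0}\notin L^1$ and the $L^1\to L^\infty$ dispersive estimate does not apply to $U(t)\Delta^{-1}W^{X_0}$ as written.
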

\begin{rem*}\
\begin{itemize}
\item[(1)] We do not have a simple explicit formula for the exponent $\delta$, and it is not clear whether it is universal (independent of initial conditions).
\item[(2)] Assumption (A4), which plays a critical role in our analysis, implies that $\Delta^{-1}W^{X}\not\in \mathcal{L}^{2}(\mathbb{R}^{3}, d^3 x)$. In contrast, if $\lambda_0$ is positive we expect that $\beta_{t}$ remains uniformly square-integrable, as $t\rightarrow \infty.$
\end{itemize}
\end{rem*}

Next, we outline the strategy of the proofs of our main results: The equations of motion ~\eqref{eq:Motion} are a system of semilinear integro-differential equations. We solve the equation for $\beta_{\cdot}$, given the particle trajectory $X_{\cdot},$ and plug the result into the equation for $P_{\cdot}.$ We derive, in Subsection ~\ref{subsec:deriveP} below, an effective integro-differential equation for $P_{\cdot}$ of the form
\begin{equation}\label{eq:MotionP}
\dot{P}_{t}=L_{1}(P_{\cdot})(t)+L_{2}(P_{\cdot})(t)+N(P_{\cdot})(t),
\end{equation} where $L_{1}$ and $L_2$ are linear operators from the space $\Upsilon_{[0,t]}:=\{P_{s}\in \mathbb{R}^{3}| 0\leq s\leq t\}$ of momentum trajectories to $\mathbb{R}^{3}$, and $N$ is a non-linear operator from $\Upsilon_{[0,t]}$ to $\mathbb{R}^{3}$. The operator $L_{1}$ is of convolution type, i.e.,
\begin{equation}\label{eq:difF}
L_{1}(P_{\cdot})(t)=-\int_{0}^{t} ds\ f(t-s) P_{s},
\end{equation} where
\begin{equation}\label{eq:difF2}
f(s)=Z Re\langle W, e^{i \frac{\Delta s}{2}}W\rangle,
\end{equation} with $\langle \cdot, \ \cdot\rangle$ the scalar product on $\mathcal{L}^{2}(\mathbb{R}^{3}, d^3x)$ and $Z$ some positive constant. The operator $L_2$ is given by
\begin{equation}\label{eq:difL2p}
L_{2}(P_{\cdot})(t)=f(t)\int_{0}^{t} P_{s}\ ds.
\end{equation}
Let $K_{t}\in \mathbb{R}$, $t\in [0,\infty)$, be the solution of the linear equation
\begin{equation}\label{eq:wienerHopf}
\begin{array}{lll}
\dot{K}_{t}&=&L_{1}(K_{\cdot}) (t) =-\int_{0}^{t} ds\ f(t-s) K_{s},\\
& &\\
K_0&=&1.
\end{array}
\end{equation} By Duhamel's principle, Eq.~\eqref{eq:MotionP} can then be converted into the integral equation
\begin{equation}\label{eq:rewriteK}
P_{t}=K_{t}P_0+\int_{0}^{t} ds\ K_{t-s} L_{2}(P_{\cdot})(s)+\int_{0}^{t} ds\ K_{t-s} N(P_{\cdot})(s).
\end{equation}
It is easy to derive from the properties of the function $f$ that $K_{t}= O(t^{-\frac{1}{2}})$, as $t\rightarrow \infty$ (see ~\eqref{eq:asymK} below). From this we infer that the terms on the right hand side of ~\eqref{eq:rewriteK} do not decay faster than $t^{-\frac{1}{2}},$ as $t\rightarrow \infty.$ This is disappointing, and we propose to show that the leading terms on the right hand side of ~\eqref{eq:rewriteK} cancel each other to yield a decay of $O(t^{-\frac{1}{2}-\delta}),$ as $t\rightarrow \infty,$ for some $\delta>0.$ This cancellation is exhibited as follows: We integrate ~\eqref{eq:MotionP} over time to find that
\begin{equation*}
P_{t}=P_{0}+\int_{0}^{t} ds\ L_{1}(P_{\cdot})(s)+\int_{0}^{t} ds\ L_{2}(P_{\cdot})(s)+\int_{0}^{t} ds \ N(P_{\cdot})(s).
\end{equation*}
Multiplying this equation by $K_{t}$ and subtracting the resulting equation from ~\eqref{eq:rewriteK}, we obtain
\begin{equation}\label{eq:afterSubs}
\begin{array}{lll}
P_{t}(1-K_{t})&=&-K_{t}\int_{0}^{t} ds \ L_{1}(P_{\cdot}) (s)+\int_{0}^{t} ds\ (K_{t-s}-K_{t}) L_2(P_{\cdot})(s) \\
& &\\
& &+\int_{0}^{t} ds\ (K_{t-s}-K_{t}) N(P_{\cdot})(s).
\end{array}
\end{equation}
The first term on the right hand side of ~\eqref{eq:afterSubs} does not have an improved decay in time, yet. We rewrite the second term so as to cancel the leading contribution to the first term. Using ~\eqref{eq:difL2p}, we find
\begin{equation}\label{eq:halfway}
\begin{array}{lll}
& &-\int_{0}^{t}ds\ (K_{t-s}-K_{t})f(s)\int_{0}^{s} du \ P_u\\
& &\\
&=& \int_{0}^{t}ds\ (K_{t-s}-K_{t})f(s)\int_{s}^{t} du \ P_u-\int_{0}^{t} ds\ (K_{t-s}-K_t) f(s)\int_{0}^{t} du\ P_u\\
& &\\
&=& \int_{0}^{t} ds\ (K_{t-s}-K_t)f(s)\int_{s}^{t} du\ P_u-\int_{0}^{t}ds\ K_{t-s} f(s) \int_{0}^{t}du\ P_u\\
& &+K_{t}\int_{0}^{t}ds\ f(s) \int_{0}^{t}du\ P_u.
\end{array}
\end{equation} Using expression ~\eqref{eq:difF2} for $f(s)$, the last term on the right hand side of ~\eqref{eq:halfway} is found to be given by
\begin{equation}
2K_{t}Re\langle W, (i\Delta)^{-1} e^{i\frac{\Delta t}{2}}W\rangle \int_{0}^{t}du\ P_u,
\end{equation} where we have used that $Re\langle W, (i\Delta)^{-1}W\rangle=0.$ The first term on the right hand side of ~\eqref{eq:afterSubs} is rewritten as follows
\begin{equation}\label{eq:rewrite2}
\begin{array}{lll}
-K_{t}\int_{0}^{t}ds\ L_{1}(P_{\cdot})(s)&=&-K_{t}\int_{0}^{t} ds\ \int_{0}^{s} du\ f(s-u) P_{u}\\
& &\\
&=&-K_{t} \int_{0}^{t}ds\ \int_{0}^{s} du\ Re\langle W, e^{i\frac{\Delta(s-u)}{2}} W\rangle P_{u}\\
& &\\
&=& -2K_{t} Re\langle W, (i\Delta)^{-1} \int_{0}^{t} du\ e^{i\frac{\Delta (t-u)}{2}}W\rangle P_{u},
\end{array}
\end{equation} where we have used integration by parts and $Re\langle W, (i\Delta)^{-1}W\rangle=0.$ Combining ~\eqref{eq:halfway} and ~\eqref{eq:rewrite2} we find that
\begin{equation}\label{eq:finalForm}
\begin{array}{lll}
P_{t}(1-K_{t})&=& 2Z K_{t} Re\langle W, (i\Delta)^{-1} \int_{0}^{t} ds\ [e^{i\frac{\Delta (t-s)}{2}}-e^{i\frac{\Delta t}{2}}]W\rangle P_{s}\\
& &\\
& &+Z\int_{0}^{t} ds\ (K_{t-s}-K_{t})Re\langle W, e^{i\frac{\Delta s}{2}}W\rangle \int_{s}^{t}du\ P_{u}\\
& &\\
& &-Z\int_{0}^{t}ds\ K_{t-s} Re\langle W, e^{i\frac{\Delta s}{2}}W\rangle \int_{0}^{t} du\ P_{u}\\
& &\\
& &+\int_{0}^{t} ds \ (K_{t-s}-K_{t}) N(P_{\cdot})(s).
\end{array}
\end{equation} Among the terms on the right hand side of ~\eqref{eq:finalForm}, the third term looks troublesome. In order to show that it has the desired decay, $O(t^{-\frac{1}{2}-\delta})$, with $\delta>0$, we recall that
\begin{equation}\label{eq:higherOrder}
\begin{array}{lll}
Z\int_{0}^{t} ds\ K_{t-s} Re\langle W, e^{i\frac{\Delta s}{2}}W\rangle=\int_{0}^{t} ds\ K_{t-s} f(s)
=L_{1}(K_{\cdot})(t)=\dot{K}_{t}.
\end{array}
\end{equation}
Since $K_{t}=O(t^{-\frac{1}{2}})$, we may expect that $\dot{K}_{t}=O(t^{-\frac{3}{2}}),$ and this is indeed the case. Making the self-consistent assumption that $P_{t}=O(t^{-\frac{1}{2}-\delta}),$ for some $\delta>0$, we find that all the terms on the right hand side of ~\eqref{eq:finalForm} have the appropriate decay in $t.$ In order to extract the decay of $P_{t}$ in $t$ from ~\eqref{eq:finalForm} in a rigorous manner, we would like to convert ~\eqref{eq:finalForm} into an integral equation for $P_{t}$ by dividing both sides by $1-K_{t}$. Since $K_{t}\rightarrow 1$, as $t\rightarrow 0,$ some care is needed for small values of $t$. Since we know that $K_{t}\rightarrow 0$, as $t\rightarrow \infty,$ it suffices to ``wait long enough'' before dividing by $1-K_{t}$. This amounts to dividing the time axis $[0,\infty)$ into two subintervals, $[0,T]$ and $[T,\infty)$, where $T$ is chosen so large that $K_{t}\leq \epsilon$, for $t\geq T$, with $\epsilon$ small enough. On the interval $[0,T]$, we use standard existence and uniqueness theorems to solve Eq.~\eqref{eq:rewriteK}, and, on the interval $[T,\infty)$, Eq.~\eqref{eq:afterSubs} is rewritten appropriately with an inhomogenous part depending on $\{P_{s}\}_{0\leq s\leq T}$. The resulting equation is interpreted as an equation on a Banach space
$$
\mathcal{B}_{\delta,T}:=\{P_{\cdot}|\ |t^{\frac{1}{2}+\delta}||P_{t}|\ \text{uniformly bounded for}\ t\in [T,\infty)\}
$$
with the norm
$$\|P_{\cdot}\|_{\delta,T}:=\|\chi_{[T,\infty)} t^{\frac{1}{2}+\delta}P\|_{\infty}$$ where $\chi_{I}$ denotes the characteristic function of the interval $I\subset\mathbb{R}.$
The idea is then to show that, for an appropriate choice of $\delta>0,$ the equation can be solved in $\mathcal{B}_{\delta,T}$ by a standard fixed-point theorem.

We will discuss some key elements in applying the fixed-point theorem in Subsection ~\ref{subsec:FixedPointTHM}.

\section{Some Technical Details of the Proof}
In this section we explain two elements of our proof: one is to derive Eq.~\eqref{eq:MotionP}, the second one is to show how to make a fixed-point theorem applicable. For a more detailed presentation we refer to a forthcoming paper.
\subsection{Derivation of ~\eqref{eq:MotionP}}\label{subsec:deriveP}
We begin by recasting equations \eqref{eq:Motion} in a more convenient form. We define a function $\delta_t$ by
\begin{align*}
\beta_t=:-2\kappa (-\Delta)^{-1}W^{X_t}+\delta_t\,.
\end{align*}
The equation for $\beta_t$ implies one for $\delta_t$, and using Duhamel's principle, we find that
\begin{align*}
\delta_t=\e^{\i\frac{\Delta t}{2}}\beta_0+2\kappa \e^{\i\frac{\Delta t}{2}}(-\Delta)^{-1}W^{X_0}-\frac{2\kappa}{M_0}\int_0^t ds\ \e^{\i\frac{\Delta(t-s)}{2}}(-\Delta)^{-1}P_s\cdot\nabla_{x} W^{X_s}\,.
\end{align*}
Plugging this equation into the equation for $\dot{P}_t$ we obtain, after some manipulation,
\begin{equation}\label{eq:primitiveP}
\begin{array}{lll}
\dot{P}_t&=&2\kappa \Re\scalar{\nabla_x W^{X_t}}{\e^{\i\frac{\Delta t}{2}}\beta_0}+4\kappa^2\Re\scalar{\nabla_x W}{\e^{\i\frac{\Delta t}{2} }(-\Delta)^{-1}W^{X_0-X_t}}\\
&-&\frac{4\kappa^2 }{M_0}\Re\scalar{\nabla_x W}{\int_0^t ds\ \e^{\i\frac{\Delta(t-s)}{2}}(-\Delta)^{-1}P_s\cdot\nabla_xW^{X_s-X_t}}\,.
\end{array}
\end{equation}
\textbf{Remark:} Here we have used the spherical symmetry of $W$ to cancel terms of the form $\scalar{\nabla_xW}{S}$, where $S$ is spherically symmetric. The fact that such terms vanish will be used repeatedly.\\

Next, we isolate the terms linear in $P$.
\begin{align*}
&\Re\scalar{\nabla_x W}{\e^{\i\frac{\Delta t}{2}}(-\Delta)^{-1}W^{X_0-X_t}}=\Re\scalar{\nabla_x W}{\e^{\i\frac{\Delta t}{2}}(-\Delta)^{-1}(W^{X_0-X_t}-W)}\\
=&\frac{1}{M_0}\Re\scalar{\nabla_x W}{\e^{\i\frac{\Delta t}{2}}(-\Delta)^{-1}\int_0^t ds\ P_s\cdot\nabla_x W}\\
&+\frac{1}{M_0}\Re\scalar{\nabla_xW}{\e^{\i\frac{\Delta t}{2}}(-\Delta)^{-1}\int_0^tds\ P_s\cdot\nabla_x[W^{X_0-X_s}-W]}\,.
\end{align*}
Proceeding with the other term in ~\eqref{eq:primitiveP} in a similar way, we arrive at the equation
\begin{equation}\label{eq:split}
\begin{array}{lll}
\dot{P}_t&=&-\frac{4\kappa^2}{M_0}\Re\scalar{\nabla_x W}{\int_0^t ds\ \e^{\i\frac{\Delta(t-s)}{2}}(-\Delta)^{-1}P_s\cdot\nabla_xW}\\
& &+\frac{4\kappa^2}{M_0}\Re\scalar{\nabla_xW}{\e^{\i\frac{\Delta t}{2}}(-\Delta)^{-1}\int_0^t ds\ P_s \cdot\nabla_x W}\\
& &+N(P)(t)\,,
\end{array}
\end{equation}
where
\begin{equation}
\begin{array}{lll}
N(P)(t)&:=&2\kappa\Re\scalar{\nabla_xW^{X_t}}{\e^{\i\frac{\Delta t}{2}}\beta_0}\\
& &+\frac{4\kappa^2}{M_0}\Re\scalar{\nabla_xW}{\e^{\i\frac{\Delta t}{2}}(-\Delta)^{-1}\int_0^t\ ds\  P_s\cdot\nabla_x[W^{X_0-X_s}-W]}\\
& &-\frac{4\kappa^2}{M_0}\Re\scalar{\nabla_x W}{\int_0^t ds\ \e^{\i\frac{\Delta(t-s)}{2}}(-\Delta)^{-1}P_s\cdot\nabla_x[W^{X_s-X_t}-W]}\\
&=&2\kappa\Re\scalar{\nabla_xW^{X_t}}{\e^{\i\frac{\Delta t}{2}}\beta_0}\\
& &+\frac{4\kappa^2}{M_0^2}\Re\scalar{\nabla_xW}{\e^{\i\frac{\Delta t}{2}}(-\Delta)^{-1}\int_0^t ds\ P_s\cdot\nabla_x\int_{0}^{s} ds_1\ P_{s_1}\cdot \nabla_{x}W^{X_0-X_{s_1}}}\\
& &+\frac{4\kappa^2}{M_0^2}\Re\scalar{\nabla_x W}{\int_0^t ds\ \e^{\i\frac{\Delta(t-s)}{2}}(-\Delta)^{-1}P_s\cdot\nabla_x\int_{s}^{t} ds_1\ P_{s_1}\cdot \nabla_x W^{X_{s_1}-X_{t}}}.
\end{array}
\end{equation}
Using that $W$ is spherically symmetric, we have, for $k=1,2,3,$
\begin{align}
\scalar{\partial_{x_k}W}{\e^{\i\frac{\Delta t}{2}}(-\Delta)^{-1}P_s\cdot\nabla_x W}=\frac{1}{3}P_s^{(k)}\scalar{W}{\e^{\i\frac{\Delta t}{2}}W}\,,
\end{align} where $P^{(k)}$ is the $k$th component of $P,$
and (\ref{eq:split}) is thus found to have the form ~\eqref{eq:MotionP}.

\subsection{A Fixed Point Theorem}\label{subsec:FixedPointTHM}
To render the analysis after Eq.~\eqref{eq:finalForm} rigorous, we reformulate ~\eqref{eq:finalForm} in the following form:
\begin{equation}
\begin{array}{lll}
P_{t}&=&\frac{1}{1-K_t}L(P_{\cdot})(t)+\frac{1}{1-K_t}\int_{0}^{t} ds \ (K_{t-s}-K_{t}) N(P_{\cdot})(s)\\
& &\\
&=&\frac{1}{1-K_t}L(\chi_{[T,\infty)}P_{\cdot})(t)+\frac{1}{1-K_t}\int_{0}^{t} ds \ (K_{t-s}-K_{t}) [N(P_{\cdot})(s)-N(\chi_{[0,T)}P_{\cdot})(s)]\\
& &\\
&+&\frac{1}{1-K_t}L(\chi_{[0,T)}P_{\cdot})(t)+\frac{1}{1-K_t}\int_{0}^{t} ds \ (K_{t-s}-K_{t}) N(\chi_{[0,T)}P_{\cdot})(s)\\
& &\\
&=&\tilde{A}(\chi_{[T,\infty)}P_{\cdot})(t)+\tilde{F}(\chi_{[0,T)}P_{\cdot})(t)
\end{array}
\end{equation} where $L$ is the linear part on the right hand side of ~\eqref{eq:finalForm}, and the terms on the last line are defined to be the first and the second lines after the second equality sign.

To meet the criteria of applicability of a standard fixed-point theorem, we need to establish the following results: (1) The term $\tilde{F}(\chi_{[0,T)}P_{\cdot})$ is small in the Banach space $\mathcal{B}_{\delta,T},$ for some $\delta>0$ and $T>0.$ This will be shown in ~\eqref{eq:smallness}, below. (2) For some $\delta\in (0,\frac{1}{2}),$ the map $\tilde{A}(\chi_{[T,\infty)}\cdot): \ \mathcal{B}_{\delta,T}\rightarrow \mathcal{B}_{\delta,T}$ is a contraction if restricted to a sufficiently small neighborhood of $0.$ This is verified in ~\eqref{eq:contraction}, below. These two facts, together with the observation that $\tilde{A}(\chi_{[T,\infty)}0)\equiv 0,$ obviously make a standard fixed-point theorem applicable and hence imply global existence of a solution in the space $\mathcal{B}_{\delta,T}$.

In what follows we sketch proofs of (1) and (2).

In order to see that $\tilde{F}(\chi_{[0,T)P_{\cdot}})$ is small, for a suitable choice of initial conditions, we observe that it is defined in terms of $\chi_{[0,T)}P_{\cdot}$ and the initial condition $\beta_0$; moreover if $P_0=0$ and $\beta_0\equiv 0$ then $P_{\cdot}\equiv 0$, which implies that $\tilde{F}\equiv 0.$ From a simple proof of local existence of solutions we infer the following proposition.
\begin{proposition}
For any $T>0,$ there exists some $\epsilon=\epsilon(T)>0$ such that
if $\|\langle x\rangle^{-3}\beta_0\|_{2}$, $|P_0|\leq \epsilon $ then
\begin{equation}\label{eq:smallSol}
|P_{t}|\leq T^{-2},\ \text{for any}\ t\leq T.
\end{equation} Moreover, for any $\delta \leq \frac{3}{4}$, there is an $\tilde\epsilon(T)>0$ satisfying $\displaystyle\lim_{T\rightarrow \infty}\tilde\epsilon(T)=0$ such that
\begin{equation}\label{eq:smallness}
\|\tilde{F}(\chi_{[0,T)}P_{\cdot})\|_{\mathcal{B}_{\delta,T}}\leq \tilde\epsilon(T) .
\end{equation}
\end{proposition}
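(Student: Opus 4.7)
The proposition has two assertions; I would prove them in order, the second building on the first.

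\emph{The bound \eqref{eq:smallSol}} is a local existence statement. I would run a Picard iteration on $C([0,T];\mathbb{R}^3)$ directly for the Volterra integral equation \eqref{eq:MotionP} derived in Subsection~\ref{subsec:deriveP}. The driving (inhomogeneous) term is $2\kappa\Re\langle\nabla_x W^{X_t},e^{\i\Delta t/2}\beta_0\rangle$, controlled by $C(1+t)^{-3/2}\|\beta_0\|_\star$ where $\|\cdot\|_\star$ denotes the appropriate weighted $\mathcal{L}^2$ norm, via weighted local decay for the free Schr\"odinger group. The kernels in $L_1, L_2$ define bounded operators on $C([0,T];\mathbb{R}^3)$ uniformly in $T$, since $|\langle W,e^{\i\Delta s/2}W\rangle|\lesssim(1+s)^{-3/2}$ by (A1)--(A2), and the nonlinear operator $N$ is higher order in $P$. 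Closing the contraction and using a Gronwall argument yields $|P_t|\leq C(T)(|P_0|+\epsilon)$ on $[0,T]$, so choosing $\epsilon=\epsilon(T)$ small enough secures $|P_t|\leq T^{-2}$ throughout $[0,T]$.

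\emph{The smallness \eqref{eq:smallness}} follows by feeding this bound into the explicit form of $\tilde F$. For $t\geq T$ with $T$ large enough that $|K_t|\leq 1/2$ (using $K_t=O(t^{-1/2})$), the prefactor $(1-K_t)^{-1}$ is uniformly bounded. Each linear term in $L$ in \eqref{eq:finalForm} carries a kernel made of $K_t$, $K_{t-s}-K_t$, or (via \eqref{eq:higherOrder}) $\dot{K}_t$, convolved with one of the Schr\"odinger correlation kernels $\langle W,e^{\i\Delta s/2}W\rangle$ or $\langle W,(-\Delta)^{-1}e^{\i\Delta s/2}W\rangle$, paired against the truncated $P$. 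Using $|P_s|\leq T^{-2}$ on $[0,T]$ and zero for $s\geq T$, the $s$-integral effectively runs over $[0,T]$ and contributes a factor $T\cdot T^{-2}=T^{-1}$, which combined with the $t^{-1/2}$ decay of $K_t$ gives the desired $t^{-1/2-\delta}\cdot o_{T\to\infty}(1)$ for every $\delta\leq 3/4$. The piece $N(\chi_{[0,T)}P_\cdot)$ contributes a linear-in-$\beta_0$ term of size $\epsilon\cdot t^{-3/2}$ and two quadratic-in-$P$ terms of size $O(T^{-3})$, all of which lie in $\mathcal{B}_{\delta,T}$ with norm vanishing as $T\to\infty$, after further shrinking $\epsilon(T)$ if necessary.

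\emph{The main obstacle} is the derivation of the decay rates used throughout: the asymptotics $K_t=O(t^{-1/2})$ and $\dot{K}_t=O(t^{-3/2})$, and the precise decay of $\langle W,(-\Delta)^{-1}e^{\i\Delta s/2}W\rangle$. Assumption (A4), $\hat{W}(0)\neq 0$, is decisive here: splitting the Fourier integral at $|k|\sim s^{-1/2}$, the low-frequency piece is of order $\hat{W}(0)^2\int_{|k|\leq s^{-1/2}}|k|^{-2}\,dk\sim s^{-1/2}$, while the high-frequency piece decays faster by stationary phase. The asymptotics of $K_t$ are then extracted from the Laplace transform of \eqref{eq:wienerHopf} via a Wiener--Hopf/Tauberian argument. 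I would establish these bounds in a preliminary lemma, then apply them term by term in the estimates above.
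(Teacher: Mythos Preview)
The paper does not give a detailed proof of this proposition: it only remarks that $\tilde F$ vanishes when $P_0=0$ and $\beta_0\equiv 0$, invokes ``a simple proof of local existence of solutions,'' and defers all details to a forthcoming paper. Your Picard/Gronwall argument for \eqref{eq:smallSol} and the term-by-term estimation of $\tilde F$ using the decay of $K_t$, $\dot K_t$ (via \eqref{eq:higherOrder}) and of the free Schr\"odinger correlators is precisely the quantitative fleshing-out that the paper's one-line sketch calls for, and is correct in outline.
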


Now we turn to our sketch of the proof of (2). Define a function $\Omega:\ [0,\frac{1}{2})\rightarrow \mathbb{R}^{+}$ by
\begin{equation}\label{eq:difOmega}
\Omega(\delta):=\frac{1}{\pi}\int_{0}^{1}dr\ \frac{1}{1+(1-r)^{\frac{1}{2}}}(1-r)^{-\frac{1}{2}} [\frac{1}{1-2\delta}(r^{-\frac{1}{2}}-r^{-\delta})+ r^{\frac{1}{2}-\delta}].
\end{equation} It is easy to find, by direct computation, that the set $\{\delta| \delta>0,\ \Omega(\delta)< 1\}$ is not empty. One of our key results is the following theorem.
\begin{theorem}
If we restrict the domain of definition of $\tilde{A}(\chi_{[T,\infty)}\cdot)$ to a ball in the space $\mathcal{B}_{\delta,T}$ around $P_{\cdot}\equiv 0$ of sufficiently small radius $\epsilon_0$, for some $\delta>0,$ then $\tilde{A}(\chi_{[T,\infty)}\cdot)$ is a contractive map. Specifically if $\|P_{\cdot}\|_{\mathcal{B}_{\delta,T}}$, $\|Q_{\cdot}\|_{\mathcal{B}_{\delta,T}}\leq \epsilon_0$ then
\begin{equation}\label{eq:contraction}
\|\tilde{A}(\chi_{[T,\infty)}P_{\cdot})-\tilde{A}(\chi_{[T,\infty)}Q_{\cdot})\|_{\mathcal{B}_{\delta},T}\leq [\Omega(\delta)+\epsilon]\|P_{\cdot}-Q_{\cdot}\|_{\mathcal{B}_{\delta},T}
\end{equation} where $\epsilon=\epsilon(|P_0|, \|\langle x\rangle^{-3}\beta_0\|_{2},T)$ has the property that $\epsilon(0,0,\infty)=0.$
\end{theorem}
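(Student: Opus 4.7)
The plan is to decompose the difference $\tilde{A}(\chi_{[T,\infty)}P_\cdot)(t) - \tilde{A}(\chi_{[T,\infty)}Q_\cdot)(t)$ into a purely linear contribution, coming from the operator $L$ appearing in \eqref{eq:finalForm}, and a nonlinear contribution, coming from the terms built out of $N$, and to bound each in the $\mathcal{B}_{\delta,T}$ norm. For $T$ chosen so large that $|K_t|\leq\tfrac{1}{2}$ on $[T,\infty)$, the prefactor $1/(1-K_t)$ is uniformly bounded; writing it as $1 + K_t/(1-K_t)$ the correction is $O(t^{-1/2})$ by the asymptotics of $K_t$, which contributes extra decay that can be absorbed into $\epsilon$. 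Thus the linear bound is effectively an operator-norm estimate for $L$ itself in $\mathcal{B}_{\delta,T}$, and this is where $\Omega(\delta)$ must be extracted.

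For the linear piece I would substitute, one by one, the three linear-in-$P$ terms on the right-hand side of \eqref{eq:finalForm}. I would use the asymptotics $K_t = c_0 t^{-1/2} + O(t^{-3/2})$ (a Wiener-Hopf/Laplace analysis of \eqref{eq:wienerHopf}), $\dot{K}_t = O(t^{-3/2})$ by \eqref{eq:higherOrder}, and $f(s) = c_1 s^{-3/2} + O(s^{-5/2})$ from stationary phase applied to \eqref{eq:difF2}; the nonvanishing of the leading constants rests on $\hat W(0)\neq 0$ (assumption (A4)). Inserting the ansatz $|P_s - Q_s|\leq \|P-Q\|_{\mathcal{B}_{\delta,T}} s^{-1/2-\delta}$ and substituting $r = s/t$, each of the three summands collapses to a weighted integral over $r\in[0,1]$. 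Matching the pieces term by term with \eqref{eq:difOmega}, the factor $(1-r)^{-1/2}$ tracks the ratio $K_{t-s}/K_t$; the bracket $\frac{1}{1-2\delta}(r^{-1/2}-r^{-\delta}) + r^{1/2-\delta}$ arises from the closed-form primitives of $u^{-1/2-\delta}$ on the intervals $[0,t]$ and $[s,t]$; and the denominator $1+(1-r)^{1/2}$ absorbs the multiplicative effect of $1/(1-K_t)$ at the scale $s\sim t$. Summing yields the $\Omega(\delta)$ bound up to errors that feed into $\epsilon$.

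For the nonlinear piece I would use that every term in $N(P)(t)$ is either linear in $\beta_0$ (with trajectory dependence through $W^{X_t}$ and hence through $\int_0^t P_s\,ds$) or at least quadratic in $P$. Applying the telescoping identity $AB-A'B' = (A-A')B + A'(B-B')$ to $[N(P)-N(\chi_{[0,T)}P)] - [N(Q)-N(\chi_{[0,T)}Q)]$ and using that $\chi_{[0,T)}P$ and $\chi_{[0,T)}Q$ coincide (the short-time part is fixed by the local existence result underlying \eqref{eq:smallness}), each resulting summand carries one factor $\|P_\cdot - Q_\cdot\|_{\mathcal{B}_{\delta,T}}$ together with at least one small factor from $\{\|P\|_{\mathcal{B}_{\delta,T}},\|Q\|_{\mathcal{B}_{\delta,T}},\|\langle x\rangle^3\beta_0\|_2,|P_0|\}$. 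The weighted $\mathcal{L}^2$ hypothesis on $\beta_0$ supplies the dispersive decay $\|e^{i\Delta t/2}\beta_0\|_\infty = O(t^{-3/2})$; the bound $|K_{t-s}-K_t|\lesssim \min\{s/(t(t-s))^{1/2}, 1\}$ obtained from the asymptotics of $K$; and the spherical symmetry of $W$ (eliminating terms of the form $\langle\nabla_x W,S\rangle$ for $S$ spherically symmetric, as used already in the derivation of \eqref{eq:primitiveP}) together produce the required $t^{-1/2-\delta}$ decay with a prefactor tending to $0$ as the initial data shrink and $T\to\infty$, giving the claimed $\epsilon(|P_0|,\|\langle x\rangle^3\beta_0\|_2,T)$.

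The principal obstacle is the sharp identification of the linear constant as precisely $\Omega(\delta)$ rather than some unspecified $C(\delta)$, since only the former is known (by direct computation after \eqref{eq:difOmega}) to be strictly less than $1$ for some $\delta\in(0,1/2)$. Producing the exact match requires extracting explicit leading constants in the asymptotics of $K_t$, $\dot{K}_t$, and $f$, verifying that all subleading and remainder contributions can be quarantined into the $\epsilon$ correction (and not double-counted against $\Omega(\delta)$), and handling carefully the boundary region $s\to t$ where $K_{t-s}$ is of order one rather than decaying; the Neumann-style expansion of $1/(1-K_t)$ explains the appearance of the somewhat unusual weight $1/(1+(1-r)^{1/2})$ in \eqref{eq:difOmega}. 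Once this sharp linear estimate is in hand, the contraction statement \eqref{eq:contraction} follows by combining it with the nonlinear bound above and choosing $\epsilon_0$, $|P_0|$, $\|\langle x\rangle^3\beta_0\|_2$, and $T$ so that $\Omega(\delta)+\epsilon<1$.
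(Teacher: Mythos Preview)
Your overall strategy coincides with the paper's: split off the linear part, insert the leading asymptotics of $K_t$ and of $f(s)=Z\Re\langle W,e^{i\Delta s/2}W\rangle$, change variables $s=rt$, and show that the remainder (subleading terms of $K$ and $f$, the third linear term via $\dot K_t=O(t^{-3/2})$, the prefactor $1/(1-K_t)$, and all nonlinear contributions) lands in $\epsilon$. The paper isolates exactly two linear terms, called $\Gamma_1$ and $\Gamma_2$ (the second and first summands on the right of \eqref{eq:finalForm}), and proves a key lemma giving $\|\Gamma_k\|_{\mathcal{B}_{\delta,T}}\le [\Omega_k(\delta)+\epsilon(T)]\|q\|_{\mathcal{B}_{\delta,T}}$ with $\Omega=\Omega_1+\Omega_2$; the third linear term is absorbed into $\epsilon$ via \eqref{eq:higherOrder}, not into $\Omega(\delta)$, so your ``summing the three summands yields $\Omega(\delta)$'' is slightly off.

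There is, however, a concrete misidentification in your sketch that would derail the sharp-constant computation you correctly flag as the principal obstacle. The weight $1/(1+(1-r)^{1/2})$ in \eqref{eq:difOmega} does \emph{not} come from a Neumann expansion of $1/(1-K_t)$; that prefactor depends only on $t$, is $1+O(t^{-1/2})$ for $t\ge T$, and therefore cannot produce any $r$-dependent factor. Rather, both $(1-r)^{-1/2}$ and $1/(1+(1-r)^{1/2})$ arise together from rationalizing the difference of square roots in the leading asymptotics: in $\Gamma_1$ one uses
\[
K_{t-s}-K_t\sim c\bigl[(t-s)^{-1/2}-t^{-1/2}\bigr]=\frac{c\,s}{(t-s)^{1/2}t^{1/2}\bigl[(t-s)^{1/2}+t^{1/2}\bigr]},
\]
and in $\Gamma_2$ the analogous identity for $\Re\langle W,(i\Delta)^{-1}[e^{i\Delta(t-s)/2}-e^{i\Delta t/2}]W\rangle\sim c'[(t-s)^{-1/2}-t^{-1/2}]$. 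After $s=rt$ this produces exactly $(1-r)^{-1/2}/(1+(1-r)^{1/2})$, and the cancelled factor of $s$ tames the $s^{-3/2}$ from $f$. If you look for this weight in $1/(1-K_t)$ you will not find it, and the resulting constant will not match $\Omega(\delta)$. (Also, the paper records a subleading $C_K t^{-1}$ in $K_t$ between the $t^{-1/2}$ and $O(t^{-3/2})$ terms; this matters when you bound the remainder $\Gamma_1-\widetilde{\Gamma}_1$ by splitting $[0,t]$ into boundary and bulk pieces.)
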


In what follows we show how to derive ~\eqref{eq:contraction}. We focus our attention on two terms on the right hand side of ~\eqref{eq:finalForm}, namely
\begin{align*}
\Gamma_1(\chi_{[T,\infty)}q_{\cdot} )(t):=-Z\int_0^t ds\ [K(t-s)-K(s)]\Re\scalar{W}{\e^{\i\frac{\Delta s}{2}}W}\int_s^t ds_1\ q_{s_1}\chi_{[T,\infty)}\,,
\end{align*} and
\begin{align*}
\Gamma_2(\chi_{[T,\infty)}q_{\cdot})(t):=2ZK_{t}\ Re\langle W, (i\Delta)^{-1} \int_{0}^{t} ds\ [e^{i\frac{\Delta (t-s)}{2}}-e^{i\frac{\Delta t}{2}}]W\rangle \chi_{[T,\infty)}q_{s},
\end{align*}
where we write $q$ for an arbitrary component of the vector $P$. By direct computation, and using observation ~\eqref{eq:higherOrder}, one can show that all the other terms in $\tilde{A}$ can be bounded by a small constant $\epsilon$, (see ~\eqref{eq:contraction}).

We define two functions $\Omega_1,\ \Omega_2: (0,\frac{1}{2})\rightarrow \mathbb{R}^{+}$ by
\begin{align*}
\Omega_1(\delta):=\frac{1}{(1-2\delta)\pi}\int_0^1 dr\ \frac{1}{1+(1-r)^\pez}(1-r)^\mez [r^\mez-r^{-\delta}] \,.
\end{align*} and
\begin{align*}
\Omega_2(\delta):=\frac{1}{\pi}\int_0^1 dr\ \frac{1}{1+(1-r)^\pez}(1-r)^\mez r^{\pez-\delta}\,.
\end{align*}
Obviously $$\Omega(\delta)=\Omega_1(\delta)+\Omega_2(\delta).$$
A key estimate is given in the following lemma.
\begin{lemma}
There exists some $\epsilon(T)>0$, with $\displaystyle\lim_{T\rightarrow \infty}\epsilon(T)=0$, such that
\begin{align}
\|\Gamma_{k}(\chi_{[T,\infty)}q_{\cdot})\|_{\mathcal{B}_{\delta,T}}\leq [\Omega_k+\epsilon(T)]\|q\|_{\mathcal{B}_{\delta,T}},\ k=1,2.
\end{align}
\end{lemma}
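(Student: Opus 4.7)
The plan is to replace the kernels $K_t$, $f(s)=Z\Re\scalar{W}{\e^{\i\Delta s/2}W}$, and the companion kernel appearing in $\Gamma_2$ by their leading large-time asymptotics, then insert the a priori pointwise bound $|q_{s}|\leq\|q\|_{\mathcal{B}_{\delta,T}}s^{-\frac{1}{2}-\delta}$, rescale $s=rt$, and recognize the resulting dimensionless integrals as $\Omega_1(\delta)$ and $\Omega_2(\delta)$. All subleading corrections are absorbed into $\epsilon(T)$, which will vanish as $T\to\infty$ because the integrals run over $r\in[T/t,1]$ only and the error kernels are strictly faster-decaying.

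First I would establish the asymptotics. Writing $f(s)=Z\int|\hat W(k)|^{2}\cos(k^{2}s/2)\,d^{3}k$ and applying stationary phase (using $\hat W(0)\neq 0$) gives $f(s)=a\,s^{-3/2}+O(s^{-5/2})$ for an explicit $a>0$. A Wiener--Hopf / Laplace-transform analysis of \eqref{eq:wienerHopf} then yields $K_{t}=b\,t^{-1/2}+O(t^{-3/2})$ and $\dot K_{t}=O(t^{-3/2})$, with the product $ab$ equal to the universal constant $1/\pi$ that appears in $\Omega_k$. An analogous stationary-phase computation gives
$\Re\scalar{W}{(\i\Delta)^{-1}\e^{\i\Delta\tau/2}W}=c\,\tau^{-1/2}+O(\tau^{-3/2})$, so that the $\Gamma_2$ kernel reduces, to leading order, to $c\bigl[(t-s)^{-1/2}-t^{-1/2}\bigr]$. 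The algebraic identity
$(1-r)^{-1/2}-1=r/\bigl[(1-r)^{1/2}(1+(1-r)^{1/2})\bigr]$
is the one that manufactures the characteristic factor $[1+(1-r)^{1/2}]^{-1}$ in both $\Omega_1$ and $\Omega_2$.

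For $\Gamma_1$ I would insert the above asymptotics and use $\int_{s}^{t}s_{1}^{-\frac{1}{2}-\delta}\,ds_{1}=\frac{2}{1-2\delta}\bigl[t^{\frac{1}{2}-\delta}-s^{\frac{1}{2}-\delta}\bigr]$. Substituting $s=rt$, the $t$-dependence factors out as $t^{-\frac{1}{2}-\delta}$ and the integrand rearranges to
\begin{equation*}
\frac{r^{-1/2}-r^{-\delta}}{(1-r)^{1/2}\bigl(1+(1-r)^{1/2}\bigr)},
\end{equation*}
which, after multiplying by $t^{\frac{1}{2}+\delta}$ and taking the supremum over $t\geq T$, produces exactly $\Omega_{1}(\delta)\|q\|_{\mathcal{B}_{\delta,T}}$ plus an $\epsilon(T)$ error. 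The crucial observation is that the pointwise bound only produces $r^{-1/2}-r^{-\delta}$ (not $r^{-1/2}$), which is integrable at $r=0$; this is precisely the cancellation between the two endpoints of the $s_{1}$-integration. For $\Gamma_2$ the argument is parallel but simpler: the prefactor is $K_{t}\sim bt^{-1/2}$, the kernel gives the $(1-r)^{-1/2}-1$ discussed above, the a priori bound supplies $r^{-\frac{1}{2}-\delta}$, and multiplication yields the integrand $r^{\frac{1}{2}-\delta}/[(1-r)^{1/2}(1+(1-r)^{1/2})]$ of $\Omega_{2}(\delta)$.

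The main obstacle is controlling the genuinely non-asymptotic part of the kernels: the expansion $f(s)\sim as^{-3/2}$ fails for small $s$, and $K_{t-s}$ is close to $K_{0}=1$ near $s=t$, so one cannot simply substitute leading asymptotics pointwise. I would resolve this by splitting the $s$-integral at $s=T$ (where the $\chi_{[T,\infty)}$ cutoff already helps on one side) and at $s=t-T$; on $[T,t-T]$ the asymptotic expansions are uniformly valid and give the $\Omega_k$-integrals, while on the two short intervals the remainders are bounded via Cauchy--Schwarz and the exponential decay of $W$, producing an error of size $T^{-\eta}\|q\|_{\mathcal{B}_{\delta,T}}$ for some $\eta>0$ that absorbs into $\epsilon(T)$. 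The restriction $r\geq T/t$ likewise contributes $|\Omega_{k}-\Omega_{k}^{(T/t)}|\to 0$ uniformly as $T\to\infty$, closing the estimate.
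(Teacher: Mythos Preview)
Your proposal is correct and follows essentially the same route as the paper: replace $K$ and $f$ by their leading large-$t$ asymptotics, insert the pointwise bound $|q_s|\le\|q\|_{\delta,T}\,s^{-1/2-\delta}$, rescale $s=rt$ to extract $\Omega_k(\delta)$, and handle the remainder $\Gamma_k-\widetilde\Gamma_k$ by splitting the $s$-interval into a middle piece (where the asymptotics are valid) and two short boundary pieces. The only notable differences are technical: the paper keeps an intermediate $C_Kt^{-1}$ term in the expansion of $K_t$ (so the remainder is $O(t^{-1})$, not $O(t^{-3/2})$ as you wrote) and it splits at $T^{1/3}$ and $t-T^{1/3}$ rather than at $T$ and $t-T$, which makes the boundary contributions visibly $O(T^{-1/3})$.
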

\begin{proof}
We focus on estimating $\Gamma_1$; (the estimate on $\Gamma_2$ can be derived in an almost identical way and hence is omitted).

Using Eq.~\eqref{eq:wienerHopf} and Fourier transformation we find, after some manipulations, that there exists a constant $C_{K}\in \mathbb{R}$ such that
\begin{align}\label{eq:asymK}
ZK(t)=\frac{1}{4|\hat{W}(0)|^2}\pi^{-\frac{5}{2}}t^\mez+C_{K}t^{-1}+O(t^\mdz).
\end{align}

By standard techniques
\begin{align*}
\Re\scalar{W}{\e^{\i\frac{\Delta t}{2}}W}=-2\pi^{\pdz}t^\mdz |\hat{W}(0)|^2+O(t^{-\frac{5}{2}})\,.
\end{align*}
We define $\widetilde{K},\widetilde{M},\widetilde{\Gamma}_1$ to approximate these functions,
\begin{align*}
Z\widetilde{K}(t)&:=\frac{1}{4|\hat{W}(0)|^2}\pi^{-\frac{5}{2}}t^\mez\\
\widetilde{M}&:=-2 |\hat{W}(0)|^2 \pi^{\pdz}t^\mdz\\
\widetilde{\Gamma}_1&:=-Z\int_0^t ds\ [\widetilde{K}(t-s)-\widetilde{K}(s)]\widetilde{M}(s)\int_s^t \ ds_1\ q_{s_1}[1-\chi_T(s_1)]\,.
\end{align*}
We then find that
\begin{align*}
|\widetilde{\Gamma}_1|&\leq \frac{1}{2\pi}\int_0^t ds\ [(t-s)^\mez-t^\mez]s^\mdz\int_s^t ds_1\ |q_{s_1}|\\
&\leq \frac{1}{(1-2\delta)\pi}\int_0^t ds\ [(t-s)^\mez-t^\mez]s^\mdz(t^{\pez-\delta}-s^{\pez-\delta}) \norm{q_{\cdot}}_{\delta,T}\\
&=\frac{1}{(1-2\delta)\pi}\int_0^t ds\ (t-s)^\mez t^\mez\frac{1}{(t-s)^\pez+t^\pez}s^\mez(t^{\pez-\delta}-s^{\pez-\delta})\ \norm{q_{\cdot}}_{\delta,T}\,,
\end{align*}
Changing variables, $s=rt,$ we obtain
\begin{align}
|\widetilde{\Gamma}_1|\leq t^{\mez-\delta}\Omega_1(\delta)\norm{q_{\cdot}}_{\delta,T}.
\end{align}

In the following we estimate the remainder $|\Gamma_1-\widetilde{\Gamma}_1|$. Observe that $\tilde{K}$ and $\tilde{M}$ are good approximations of $K$ and $Re\langle W, e^{i\frac{\Delta t}{2}}W\rangle$ only when $t$ is sufficiently large. Hence to estimate $|\Gamma_1-\widetilde{\Gamma}_1|$ we divide the integration domain $[0,t]$ into three parts, $[0,T^{\frac{1}{3}}]$, $[T^{\frac{1}{3}},t-T^{\frac{1}{3}}]$, and $[t-T^{\frac{1}{3}},t]$. As the estimates on different intervals are very similar, we only consider the first interval.
\begin{align*}
I_1:=&Z\int_0^{T^{\frac{1}{3}}}ds\ [K(t-s)-K(s)]\Re\scalar{W}{\e^{\i\frac{\Delta s}{2}}W}\int_s^t ds_1\ q_{s_1} [1-\chi_T(s_1)]\\
-&Z\int_0^{T^{\frac{1}{3}}}ds\ [\widetilde{K}(t-s)-\widetilde{K}(s)]\widetilde{M}(s)\int_s^t ds_1\ q_{s_1}[1-\chi_T(s_1)]\,.
\end{align*}
By ~\eqref{eq:asymK} we have that
\begin{align*}
|K(t-s)-K(t)|
\lesssim &t^\mez(t-s)^\mez\frac{s}{t^\pez+(t-s^\pez)}+(t-s)^{-1}-t^{-1}+t^{-\frac{3}{2}}\\
\lesssim &t^\mdz(1+s),
\end{align*}
because $s\leq T^{\frac{1}{3}}$, and $t\geq T$. This, together with the fact that $|\widetilde{K}(t-s)-\widetilde{K}(t)|\lesssim t^{-\frac{3}{2}}s,$ implies
\begin{align*}
|K(t-s)-K(t)||\Re\scalar{W}{\e^{\i\frac{\Delta s}{2}}W}|+|\widetilde{K}(t-s)-\widetilde{K}(t)||\widetilde{M}(s)|\lesssim t^\mdz s^\mez\,.
\end{align*}
Plugging this into $I_1$ we obtain that
\begin{align*}
|I_1|\lesssim t^{-1-\delta}\int_0^{T^{\frac{1}{3}}} ds\ s^\mez \norm{q_{\cdot}}_{\delta,T}=t^{-1-\delta}2T^{\frac{1}{6}}\norm{q_{\cdot}}_{\delta,T}\lesssim t^{\mez-\delta}T^{-\frac{1}{3}}\norm{q_{\cdot}}_{\delta,T}\,.
\end{align*}
All together, we conclude that
\begin{align*}
|\Gamma_1|\lesssim t^{\mez-\delta}[\Omega_1(\delta)+\epsilon(T)]\norm{q_{\cdot}}_{\delta,T}\,,
\end{align*}
where $\epsilon(T)\to 0$, as $T\to\infty$.\\

This completes the outline of our proof.
\end{proof}

\end{document}